\documentclass[11pt,reqno]{amsart}

\usepackage{url}
\usepackage{amssymb}
\usepackage{amscd}
\usepackage{amsfonts}
\usepackage{amsmath}
\usepackage{amsthm}
\usepackage{amsgen}

\newcommand{\sgp}{semi\-group}
\newcommand{\sgps}{semi\-groups}
\newcommand{\fs}{finite \sgp}
\newcommand{\fss}{finite \sgps}
\newcommand{\fb}{finitely based}

\newcommand{\nfb}{non\-finitely based}
\newcommand{\fg}{finitely generated}
\newcommand{\is}{involution semi\-group}

\newcommand{\fbp}{finite basis problem}

\def\FI{\ensuremath{\mathcal{FI}}}
\def\malcev{\mathop{\hbox{$\bigcirc$\kern-9.5pt\raise1pt\hbox{\scriptsize$m$}\kern1.5pt}}}

\DeclareMathOperator{\var}{\mathsf{var}} \DeclareMathOperator{\Diag}{\mathsf{Diag}}

\theoremstyle{plain}
\newtheorem{theorem}{Theorem}
\newtheorem{lemma}[theorem]{Lemma}
\newtheorem{proposition}[theorem]{Proposition}

\theoremstyle{remark}
\newtheorem*{question}{Question}

\mathsurround .2pt

\righthyphenmin=3

\title[A nonfinitely based semigroup of triangular matrices]{A nonfinitely based semigroup\\
of triangular matrices}

\author{M. V. Volkov}
\address{Institute of Mathematics and Computer Science, Ural Federal University,
Lenina 51, 620000 Ekaterinburg, Russia} \email{mikhail.volkov@usu.ru}

\begin{document}

\begin{abstract}
A new sufficient condition under which a semigroup admits no finite identity basis has been recently suggested in a joint paper by Karl
Auinger, Yuzhu Chen, Xun Hu, Yanfeng Luo, and the author. Here we apply this condition to show the absence of a finite identity basis for
the semigroup $\mathrm{UT}_3(\mathbb{R})$ of all upper triangular real $3\times 3$-matrices with 0s and/or 1s on the main diagonal. The
result holds also for the case when $\mathrm{UT}_3(\mathbb{R})$ is considered as an involution semigroup under the reflection with respect
to the secondary diagonal.
\end{abstract}

\maketitle

\section*{Introduction}

A \emph{\sgp\ identity} is just a pair of \emph{words}, i.e., elements of the free semigroup $A^+$ over an alphabet $A$. In this paper
identities are written as ``bumped'' equalities such as $u\bumpeq v$. The identity $u\bumpeq v$ is \emph{trivial} if $u=v$ and
\emph{nontrivial} otherwise. A \sgp\ $S$ \emph{satisfies} $u\bumpeq v$ where $u,v\in A^+$ if for every homomorphism $\varphi:A^+\to S$, the
equality $u\varphi=v\varphi$ is valid in $S$; alternatively, we say that $u\bumpeq v$ \emph{holds} in $S$. Clearly, trivial identities hold
in every semigroup, and there exist semigroups (for instance, free semigroups over non-singleton alphabets) that satisfy only trivial
identities.

Given any system $\Sigma$ of \sgp\ identities, we say that an identity $u\bumpeq v$ \emph{follows} from $\Sigma$ if every \sgp\ satisfying
all identities of $\Sigma$ satisfies the identity $u\bumpeq v$ as well; alternatively, we say that $\Sigma$ \emph{implies} $u\bumpeq v$. A
\sgp\ $S$ is said to be \emph{\fb} if there exists a finite identity system $\Sigma$ such that every identity holding in $S$ follows from
$\Sigma$; otherwise $S$ is called \emph{\nfb}.

The \fbp, that is, the problem of classifying \sgps\ according to the finite basability of their identities, has been intensively explored
since the mid-1960s when the very first examples of \nfb\ \sgps\ were discovered by Austin \cite{Au66}, Biryukov~\cite{Bi65}, and Perkins
\cite{Pe66,Pe69}. One of the examples by Perkins was especially impressive as it involved a very transparent and natural object. Namely,
Perkins proved that the finite basis property fails for the 6-element \sgp\ formed by the following integer $2\times 2$-matrices under the
usual matrix multiplication:
$$\begin{pmatrix} 0 & 0\\ 0 & 0\end{pmatrix},\
\begin{pmatrix} 1 & 0\\ 0 & 0\end{pmatrix},\
\begin{pmatrix} 0 & 1\\ 0 & 0\end{pmatrix},\
\begin{pmatrix} 0 & 0\\ 1 & 0\end{pmatrix},\
\begin{pmatrix} 0 & 0\\ 0 & 1\end{pmatrix},\
\begin{pmatrix} 1 & 0\\ 0 & 1\end{pmatrix}.$$
Thus, even a \fs\ can be \nfb; moreover, it turns out that \sgps\ are the only ``classical'' algebras for which finite \nfb\ objects can
exist: finite groups \cite{OaPo64}, finite associative and Lie rings \cite{Kr73,Lv73,BaOl75}, finite lattices \cite{McK70} are all \fb.
Therefore studying \fss\ from the viewpoint of the \fbp\ has become a hot area in which many neat results have been achieved and several
powerful methods have been developed, see the survey \cite{Vo01} for an overview.

It may appear surprising but the \fbp\ for \textbf{infinite} \sgps\ is less studied. The reason for this is that infinite semigroups
usually arise in mathematics as \sgps\ of transformations of an infinite set, or \sgps\ of relations on an infinite domain, or \sgps\
matrices over an infinite ring, and as a rule all these \sgps\ are ``too big'' to satisfy any nontrivial identity. For instance (see, e.g.,
\cite{BCK04}), the two integer matrices
$$\begin{pmatrix} 2 & 0\\ & 1\end{pmatrix},\
\begin{pmatrix} 2 & 1\\   & 1\end{pmatrix}$$
are known to generate a free subsemigroup in the \sgp\ $\mathrm{T}_2(\mathbb{Z})$ of all upper triangular integer $2\times 2$-matrices.
(Here and below we omit zero entries under the main diagonal when dealing with upper triangular matrices.) Therefore even such a ``small''
matrix \sgp\ as $\mathrm{T}_2(\mathbb{Z})$ satisfies only trivial identities, to say nothing about matrix \sgps\ of larger dimension.

If all identities holding in a \sgp\ $S$ are trivial, $S$ is \fb\ in a void way, so to speak. If, however, an infinite semigroup satisfies
a nontrivial identity, its \fbp\ may constitute a challenge since ``finite'' methods are non-applicable in general. Therefore, up to
recently, results classifying \fb\ and \nfb\ members within natural families of concrete infinite semigroups that contain semigroups with a
nontrivial identity have been rather sparse.

Auinger, Chen, Hu, Luo, and the author~\cite{ACHLV} have found a new sufficient condition under which a semigroup is \nfb\ and applied this
condition to certain important classes of infinite \sgps. In the present paper we demonstrate yet another application; its interesting
feature is that it requires the full strength of the main result of~\cite{ACHLV}. Namely, we prove that the \sgp\
$\mathrm{UT}_3(\mathbb{R})$ of all upper triangular real $3\times 3$-matrices whose main diagonal entries are 0s and/or 1s is \nfb. The
result holds also for the case when $\mathrm{UT}_3(\mathbb{R})$ is considered as an involution semigroup under the reflection with respect
to the secondary diagonal.

The paper is structured as follows. In Section~\ref{sec:nfb} we recall the main result from~\cite{ACHLV}, and in
Section~\ref{sec:application} we apply it to the semigroup $\mathrm{UT}_3(\mathbb{R})$.  Section~\ref{sec:open problems} collects some
concluding remarks and a related open question.

An effort has been made to keep this paper self-contained, to a reasonable extent. We use only the most basic concepts of semigroup theory
and universal algebra that all can be found in the early chapters of the textbooks \cite{BS81,CP61}, a suitable version of the main theorem
from~\cite{ACHLV}, and a few minor results from~\cite{ADV12,Ma53,Sa87}.

\section{A sufficient condition for the non-existence of a finite basis}
\label{sec:nfb}

The sufficient condition for the non-existence of a finite basis established in~\cite{ACHLV} applies to both plain semigroups, i.e., \sgps\
treated as algebras of type (2), and semigroups with involution as algebras of type (2,1). Let us recall all the concepts needed to
formulate this condition.

We start with the definition of an \is. An algebra $\langle S,\cdot,{}^\star\rangle$ of type (2,1) is called an \emph{\is} if $\langle
S,\cdot\rangle$ is a semigroup (referred to as the \emph{semigroup reduct} of $\langle S,\cdot,{}^\star\rangle$) and the unary operation
$x\mapsto x^\star$ is an involutory anti-automorphism of $\langle S,\cdot\rangle$, that is,
$$(xy)^\star=y^\star x^\star \text{ and  } (x^\star)^\star=x$$
for all $x,y\in S$.

The \emph{free \is} $\FI(A)$ on a given alphabet $A$ can be constructed as follows. Let $\overline{A}:=\{a^\star\mid a\in A\}$ be a
disjoint copy of $A$. Define $(a^\star)^\star:=a$ for all $a^\star\in\overline{A}$. Then $\FI(A)$ is the free \sgp\ $(A\cup\overline{A})^+$
endowed with the involution defined by
$$(a_1\cdots a_m)^\star:= a_m^\star\cdots a_1^\star$$
for all $a_1,\dots,a_m\in A\cup \overline{A}$. We refer to elements of $\FI(A)$ as \emph{involutory words over $A$}. An \emph{involutory
identity} $u\bumpeq v$ is just a pair of involutory words; the identity \emph{holds} in an \is\ $S$ if for every \is\ homomorphism
$\varphi:\FI(A)^+\to S$, the equality $u\varphi=v\varphi$ is valid in $S$. Now the concepts of a \fb/\nfb\ \is\ are defined exactly as in
the plain \sgp\ case. In what follows, we use square brackets to indicate adjustments to be made in the involution case.

A class $\mathbf{V}$ of [involution] \sgps\ is called a \emph{variety} if there exits a system $\Sigma$ of [involution] \sgp\ identities
such that $\mathbf{V}$ consists precisely of all [involution] \sgps\ that satisfy every identity in $\Sigma$. In this case we say that
$\mathbf{V}$ is \emph{defined} by $\Sigma$. If the system $\Sigma$ can be chosen to be finite, the corresponding variety is said to be
\emph{\fb}; otherwise it is \emph{\nfb}. Given a class $\mathbf{K}$ of [involution] semigroups, the variety defined by the identities that
hold in each [involution] semigroup from $\mathbf{K}$ is said to be \emph{generated by $\mathbf{K}$} and is denoted by $\var\mathbf{K}$; if
$\mathbf{K}=\{{S}\}$, we write $\var{S}$ rather than $\var\{{S}\}$. It should be clear that $S$ and $\var S$ are simultaneously \fb\ or
\nfb.

A \sgp\ is said to be \emph{periodic} if each of its one-generated subsemigroups is finite and \emph{locally finite} if each of its \fg\
subsemigroups is finite. A variety of semigroups is \emph{locally finite} if all its members are locally finite.

Let $\mathbf{A}$ and $\mathbf{B}$ be two classes of \sgps. The \emph{Mal'cev product} $\mathbf{A}\malcev\mathbf{B}$ of $\mathbf{A}$ and
$\mathbf{B}$ is the class of all \sgps\ $S$ for which there exists a congruence $\theta$ such that the quotient \sgp\ $S/\theta$ lies in
$\mathbf{B}$ while all $\theta$-classes that are sub\sgps\ in $S$ belong to $\mathbf{A}$. Notice that for a congruence $\theta$ on a
semigroup $S$, a $\theta$-class forms a subsemigroup of $S$ if and only if the class is an idempotent of the quotient \sgp\ $S/\theta$.

Let $x_1,x_2,\dots,x_n,\dots$ be a sequence of letters. The sequence $\{Z_n\}_{n=1,2,\dots}$ of \emph{Zimin words} is defined inductively
by $Z_1:=x_1$, $Z_{n+1}:=Z_nx_{n+1}Z_n$. We say that a word $v$ is an [\emph{involutory}] \emph{isoterm} for a class $\mathbf{C}$ of
semigroups [with involution] if the only [involutory] word $v'$ such that all members of $\mathbf{C}$ satisfy the [involution] semigroup
identity $v\bumpeq v'$ is the word $v$ itself.

Now we are in a position to state the main result of \cite{ACHLV}. Here $\mathbf{Com}$ denotes the variety of all commutative semigroups.

\begin{theorem}[{\mdseries\cite[Theorem~6]{ACHLV}}]
\label{thm:main}
A variety $\mathbf{V}$ of $[$involution$]$ semigroups is nonfinitely based provided that
\begin{enumerate}
\item[(i)] $[$the class of all semigroup reducts of\,$]$ $\mathbf{V}$ is contained in the variety
$\var(\mathbf{Com}\malcev\mathbf{W})$ for some locally finite semigroup variety $\mathbf{W}$ and
\item[(ii)] each Zimin word is an $[$involutory$]$ isoterm relative to $\mathbf{V}$.
\end{enumerate}
\end{theorem}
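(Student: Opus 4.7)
The plan is to argue by contradiction. Assume $\mathbf{V}$ admits a finite identity basis $\Sigma$, and let $n$ bound the lengths of the (involutory) words appearing in the identities of $\Sigma$. The goal is to produce, for some sufficiently large $k$, a nontrivial identity $Z_k\bumpeq u$ that is a consequence of $\Sigma$ and therefore holds throughout $\mathbf{V}$, contradicting hypothesis~(ii).

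The engine for producing such an identity uses hypothesis~(i). Fix a locally finite semigroup variety $\mathbf{W}$ witnessing~(i), and let $F_n$ denote the free object of $\mathbf{W}$ on $n$ generators, which is finite by local finiteness. For $k$ much larger than $|F_n|$, I would show that in every $S\in\mathbf{Com}\malcev\mathbf{W}$, with congruence $\theta$ such that $S/\theta\in\mathbf{W}$, any substitution of the letters of $Z_k$ into $S$ must send several occurrences of the distinguished letter $x_{n+1}$ appearing in the recursion $Z_{n+1}=Z_n\,x_{n+1}\,Z_n$ into a common idempotent $\theta$-class. The commutativity inside such a class then permits swapping the corresponding factors, producing a rearrangement $Z_k'\neq Z_k$ such that $Z_k\bumpeq Z_k'$ holds throughout $\var(\mathbf{Com}\malcev\mathbf{W})$, and hence throughout $\mathbf{V}$ by~(i). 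In the involution setting the argument is the same: Zimin words involve only unstarred letters, and condition~(ii) forbids any nontrivial involutory identity $Z_k\bumpeq u$ in $\mathbf{V}$, so the swap just described provides the required contradiction.

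The main obstacle I anticipate is the combinatorial step of guaranteeing that the swap genuinely alters the word. Zimin words are highly self-similar and palindromic in their nested structure, so two positions that look superficially interchangeable may coincide after the swap. The delicate task is to locate a pair of occurrences within a single idempotent $\theta$-class whose swap moves at least one letter to a strictly different position in the linear reading of $Z_k$. I would attempt this by tracking the depth of each $x_{n+1}$ occurrence in the recursion and invoking the isoterm/unavoidable-pattern technology of~\cite{Sa87}, together with Ramsey-type pigeonholing, to separate positions arising at distinct recursion depths. A secondary obstacle, especially in the involution case, is checking that the Mal'cev-product structure interacts well with the unary operation $x\mapsto x^\star$ so that the swap argument transfers from the semigroup reducts back to $\mathbf{V}$ itself; this should be routine but needs verification.
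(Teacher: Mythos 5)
First, a point of reference: this paper does not prove Theorem~\ref{thm:main} at all --- it is imported verbatim from \cite[Theorem~6]{ACHLV} --- so there is no in-paper proof to compare against, and your attempt must be judged on its own terms. Judged so, it has a fatal gap. The ``engine'' you describe --- producing a fixed rearrangement $Z_k'\ne Z_k$ with $Z_k\bumpeq Z_k'$ valid throughout $\var(\mathbf{Com}\malcev\mathbf{W})$ --- never actually uses the hypothetical finite basis $\Sigma$: the parameter $n$ enters only to choose $k$, and the asserted conclusion ``$Z_k\bumpeq Z_k'$ holds in $\var(\mathbf{Com}\malcev\mathbf{W})$, hence in $\mathbf{V}$'' is a statement about hypotheses (i) and (ii) alone. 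Were it true, (i) and (ii) would be jointly unsatisfiable and the theorem vacuous; but \cite{ACHLV} exhibits varieties (e.g.\ those generated by Kauffman monoids) satisfying both, and the present paper's Theorem~\ref{thm:matrix} does the same for the variant (i$'$). So the engine must break down, and one can see where: under a substitution $\varphi$ \emph{every} occurrence of the letter $x_{n+1}$ is sent to one and the same element of $S$, so ``several occurrences landing in a common $\theta$-class'' is automatic and yields nothing; what commutativity of an idempotent $\theta$-class buys is $ab=ba$ for $a,b$ lying in \emph{one such class}, and there is no reason the images of two adjacent \emph{distinct} letters of $Z_k$ should be $\theta$-related, let alone uniformly so over all substitutions --- and an identity must hold under every substitution simultaneously.

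The actual argument of \cite[Theorem~6]{ACHLV} plays the two hypotheses against each other through local finiteness of periodic members rather than through a rearrangement of $Z_k$. In outline: condition (i) forces all periodic members of $\mathbf{V}$ to be locally finite (this is exactly the role Lemma~\ref{lm:sapir} plays in Section~\ref{sec:application}); condition (ii), combined with the assumption that $\mathbf{V}$ is defined by identities in at most $n$ letters, carries the combinatorial weight via Sapir's unavoidable-word machinery --- because all Zimin words are isoterms, the $n$-letter identities of $\mathbf{V}$ cannot act nontrivially on suitable long words, and from this one manufactures an \emph{infinite finitely generated periodic} semigroup satisfying all of $\Sigma$ and hence lying in $\mathbf{V}$, the desired contradiction. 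Note where finiteness of the basis is genuinely used: it is the words of $\Sigma$, not $Z_k$, whose length is bounded by $n$, and the pigeonhole is applied to substitution instances of the identities of $\Sigma$ inside long words. That inversion --- bounding the identities rather than trying to rearrange the Zimin word inside the Mal'cev product --- is what your proposal is missing.
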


Formulated as above, Theorem~\ref{thm:main} suffices for all applications presented in~\cite{ACHLV} but is insufficient for the purposes of
the present paper. However, it is observed in \cite[Remark 1]{ACHLV} that the theorem remains valid if one replaces the condition (i) by
the following weaker condition:
\begin{enumerate}
\item[(i$'$)] \emph{$[$the class of all semigroup reducts of\,$]$ $\mathbf{V}$ is contained in the variety
$\var(\mathbf{U}\malcev\mathbf{W})$ where $\mathbf{U}$ is a semigroup variety all of whose periodic members are locally finite and
$\mathbf{W}$ is a locally finite semigroup variety.}
\end{enumerate}
Here we will utilize this stronger form of Theorem~\ref{thm:main}.

\section{The identities of $\mathrm{UT}_3(\mathbb{R})$}
\label{sec:application}

Recall that we denote by $\mathrm{UT}_3(\mathbb{R})$ the \sgp\ of all upper triangular real $3\times 3$-matrices whose main diagonal
entries are 0s and/or 1s.  For each matrix $\alpha\in\mathrm{UT}_3(\mathbb{R})$, let $\alpha^D$ stand for the matrix obtained by reflecting
$\alpha$ with respect to the secondary diagonal (from the top right to the bottom left corner); in other words,
$(\alpha_{ij})^D:=(\alpha_{4-j\,4-i})$. Then it is easy to verify that the unary operation $\alpha\mapsto\alpha^D$ (called the \emph{skew
transposition}) is an involutory anti-automorphism of $\mathrm{UT}_3(\mathbb{R})$. Thus, we can consider $\mathrm{UT}_3(\mathbb{R})$ also
as an \is. Our main result is the following

\begin{theorem}
\label{thm:matrix} The \sgp\ $\mathrm{UT}_3(\mathbb{R})$ is \nfb\ as both a plain \sgp\ and an \is\ under the skew transposition.
\end{theorem}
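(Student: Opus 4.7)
My plan is to apply Theorem~\ref{thm:main} in its strengthened form (condition~(i$'$)) to $\mathbf{V}=\var\mathrm{UT}_3(\mathbb{R})$, in both the plain and the involutory settings. Two conditions must be verified: (a) the inclusion $\mathbf{V}\subseteq\var(\mathbf{U}\malcev\mathbf{W})$ for a variety $\mathbf{U}$ whose periodic members are all locally finite and a locally finite variety $\mathbf{W}$, and (b) the (involutory) isoterm property for every Zimin word.

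For (a), I would work with the kernel $\theta$ of the diagonal homomorphism $\mathrm{diag}\colon\mathrm{UT}_3(\mathbb{R})\to\{0,1\}^3$ onto the eight-element semilattice (componentwise product), so that $\mathrm{UT}_3(\mathbb{R})/\theta$ lies in the locally finite variety $\mathbf{W}$ of semilattices. Since the quotient is idempotent, each of the eight $\theta$-classes is a subsemigroup. Direct computation of the off-diagonal product formula in parameters $(a,b,c)$ attached to the $(1,2),(1,3),(2,3)$-entries identifies the classes: the $(1,1,1)$-class is the torsion-free real unipotent group (a $2$-step nilpotent group); the $(0,0,0)$-class is a $3$-nil semigroup satisfying $xyz\bumpeq uvw$; and the six ``mixed'' classes obey strong collapsing identities (e.g.\ $x^2\bumpeq x^3$) that reduce sufficiently long products to data depending only on the final few letters. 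I would then take $\mathbf{U}$ to be a variety defined by a single Zimin-breaking identity $Z_n\bumpeq w$ that is jointly satisfied by all eight classes; by the Burnside-type criterion of Sapir~\cite{Sa87} (together with Mal'cev's results~\cite{Ma53}), this guarantees every periodic member of $\mathbf{U}$ is locally finite. A candidate identity arises from the identity set of the unipotent class: substituting $x_i\mapsto I+a_iE_{12}+b_iE_{13}+c_iE_{23}$ and expanding the product as $I+\sum_k M_{i_k}+\sum_{k<\ell}M_{i_k}M_{i_\ell}$ shows that $u\bumpeq v$ holds there iff the letter counts and ordered-pair counts of $u,v$ agree; one then exhibits a word $w\neq Z_n$ with matching such counts for some $n$ and verifies that $Z_n\bumpeq w$ persists in the null and mixed classes.

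For (b), I would show each Zimin word $Z_n$ is an (involutory) isoterm for $\mathrm{UT}_3(\mathbb{R})$ by exhibiting substitutions $x_i\mapsto A_i$ that isolate $Z_n$ from every competing word. The contrast with the unipotent subsemigroup is crucial: although $Z_n$ admits pair-count word-aliases within that subsemigroup alone, those aliases fail in $\mathrm{UT}_3(\mathbb{R})$ once a substitution uses matrices with zeros on the diagonal, because products in the mixed $\theta$-classes collapse to positional data sensitive to the global word structure. In the involutory setting the substitution must additionally rule out identifications with involutory words in $\FI(A)\setminus A^+$. For this I would arrange each $A_i$ to have a non-palindromic diagonal such as $(1,0,0)$, so that $A_i^D$ (with diagonal $(0,0,1)$) lies in a $\theta$-class different from every $A_j$; then the diagonal of the total product already distinguishes $Z_n$ from any word mixing $x_i$ with $x_j^\star$.

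The principal obstacle I anticipate lies in step~(a): the eight $\theta$-classes are genuinely heterogeneous, the unipotent class being an infinite torsion-free nilpotent group while the other classes carry short collapsing identities, so isolating a single Zimin-breaking identity in the intersection of their identity sets, and placing it in Sapir's locally-finite-periodic framework, requires delicate combinatorial work. This is presumably where the ``full strength'' of the main theorem of~\cite{ACHLV} (condition~(i$'$) rather than the simpler~(i) with $\mathbf{U}=\mathbf{Com}$) becomes essential, since $\mathbf{Com}$ cannot contain the non-commutative unipotent class. A secondary challenge is verifying the involutory isoterm condition against the doubled alphabet $A\cup\overline{A}$.
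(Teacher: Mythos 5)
Your overall architecture for condition (i$'$) coincides with the paper's: the diagonal homomorphism onto the $8$-element idempotent semigroup of diagonal matrices, the resulting partition into eight subsemigroup $\theta$-classes, a single Zimin-breaking balanced identity $Z_n\bumpeq w$ common to all eight classes, and Sapir's criterion \cite[Lemma~3.3]{Sa87} to guarantee that the variety $\mathbf{U}$ it defines has all periodic members locally finite. But there are two genuine gaps. First, your description of the six ``mixed'' classes is incorrect: the classes with diagonals $(1,1,0)$, $(0,1,1)$ and $(1,0,1)$ contain copies of the torsion-free group $\langle\mathbb{R},+\rangle$ in their off-diagonal entries, so no collapsing identity like $x^2\bumpeq x^3$ holds in them (for diagonal $(1,1,0)$ the $(1,2)$-entry of $\alpha^k$ is $k\alpha_{12}$). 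Worse, the class with diagonal $(1,0,1)$ is a completely simple semigroup, isomorphic to a Rees matrix semigroup over $\langle\mathbb{R},+\rangle$, in which a product depends on the first letter, the last letter and \emph{all} ordered pair counts --- not on ``the final few letters''. These are exactly the classes where the choice of $w$ is delicate, and the identity itself, $Z_4\bumpeq(x_1x_2)^2x_1x_3x_1x_4x_1x_3x_1(x_2x_1)^2$, which the paper verifies case by case (using Mal'cev's class-$2$ identity $xzytyzx\bumpeq yzxtxzy$ for the unitriangular group, and $xyz\bumpeq yz$, $xyx\bumpeq x$, $xyz\bumpeq yxz$ and the first-letter/last-letter/pair-count criterion for the remaining classes), is the computational heart of the argument; your proposal identifies the constraints such a $w$ must meet but never produces one or checks it against all eight classes.

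Second, condition (ii) is only gestured at. Exhibiting ``substitutions that isolate $Z_n$'' is not a proof: one must exclude \emph{every} involutory word $v'\ne Z_n$ for which $Z_n\bumpeq v'$ holds, which is a statement about infinitely many candidates and cannot be settled by a finite list of ad hoc evaluations without a structural reduction. The paper's route is to locate a three-generated involution subsemigroup $M$ of $\mathrm{UT}_3(\mathbb{R})$ admitting a homomorphism onto the $6$-element involution semigroup $\mathrm{TA}_2^1$, for which the involutory isoterm property of all Zimin words is already known \cite[Corollaries~2.7 and~2.8]{ADV12}. Your diagonal-parity device for separating $x_j$ from $x_j^\star$ is a sensible ingredient for the involutory half, but it does not address plain-word aliases of $Z_n$, which is where the real difficulty lies.
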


\begin{proof}
We will verify that the [involution] \sgp\ variety $\var\mathrm{UT}_3(\mathbb{R})$ satisfies the conditions (i$'$) and (ii) discussed at
the end of Section~\ref{sec:nfb}; the desired result will then follow from Theorem~\ref{thm:main} in its stronger form.

Let $\mathrm{D}_3$ denote the 8-element sub\sgp\ of $\mathrm{UT}_3(\mathbb{R})$ consisting of all diagonal matrices. To every matrix
$\alpha\in\mathrm{UT}_3(\mathbb{R})$ we assign the diagonal matrix $\Diag(\alpha)\in\mathrm{D}_3$ by changing each non-diagonal entry of
$\alpha$ to~0. The following observation is obvious.

\begin{lemma}
\label{lm:support} The map $\alpha\mapsto\Diag(\alpha)$ is a homomorphism of $\mathrm{UT}_3(\mathbb{R})$ onto $\mathrm{D}_3$.
\end{lemma}

We denote by $\theta$ the kernel of the homomorphism of Lemma~\ref{lm:support}, i.e.,
$$(\alpha,\beta)\in\theta \text{ if and only if } \Diag(\alpha)=\Diag(\beta).$$
Then $\theta$ is a congruence on $\mathrm{UT}_3(\mathbb{R})$. Since each element of the \sgp\ $\mathrm{D}_3$ is an idempotent, each
$\theta$-class is a subsemigroup of $\mathrm{UT}_3(\mathbb{R})$. The next fact is the core of our proof.

\begin{proposition}
\label{pr:classes} Each $\theta$-class of $\mathrm{UT}_3(\mathbb{R})$ satisfies the identity
\begin{equation}
\label{eq:Z4} Z_4\bumpeq (x_1x_2)^2x_1x_3x_1x_4x_1x_3x_1(x_2x_1)^2.
\end{equation}
\end{proposition}

\begin{proof}
We have to consider 8 cases. First we observe that the identity~\eqref{eq:Z4} is left-right symmetric, and therefore, \eqref{eq:Z4} holds
in some sub\sgp\ $S$ of $\mathrm{UT}_3(\mathbb{R})$ if and only if it holds in the sub\sgp\ $S^D=\{s^D\mid s\in S\}$ since $S^D$ is
anti-isomorphic to $S$. This helps us to shorten the below analysis.

\smallskip

\emph{\textbf{Case 1:}} $S_{000}=\left\{\left(\begin{smallmatrix}
0 &  \alpha_{12} & \alpha_{13}\\
  & 0            & \alpha_{23}\\
  &              & 0
\end{smallmatrix}\right)\mid \alpha_{12},\alpha_{13},\alpha_{23}\in\mathbb{R}\right\}$. This sub\sgp\ is easily seen to satisfy the identity
$x_1x_2x_3\bumpeq y_1y_2y_3$ which clearly implies~\eqref{eq:Z4}.

\smallskip

\emph{\textbf{Case 2:}} $S_{100}=\left\{\left(\begin{smallmatrix}
1 &  \alpha_{12} & \alpha_{13}\\
  & 0            & \alpha_{23}\\
  &              & 0
\end{smallmatrix}\right)\mid \alpha_{12},\alpha_{13},\alpha_{23}\in\mathbb{R}\right\}$. Multiplying 3 arbitrary matrices
$\alpha,\beta,\gamma\in S_{100}$, we get
\begin{multline*}\begin{pmatrix}
1 & \alpha_{12} & \alpha_{13}\\
  & 0           & \alpha_{23}\\
  &             & 0
\end{pmatrix}\cdot
\begin{pmatrix}
1 & \beta_{12}  & \beta_{13}\\
  & 0           & \beta_{23}\\
  &             & 0
\end{pmatrix}\cdot
\begin{pmatrix}
1 & \gamma_{12} & \gamma_{13}\\
  & 0           & \gamma_{23}\\
  &             & 0
\end{pmatrix}=\\
\begin{pmatrix}
1 & \gamma_{12} & \gamma_{13}+\beta_{12}\gamma_{23}\\
  & 0           & 0\\
  &             & 0
\end{pmatrix}=\begin{pmatrix}
1 & \beta_{12}  & \beta_{13}\\
  & 0           & \beta_{23}\\
  &             & 0
\end{pmatrix}\cdot
\begin{pmatrix}
1 & \gamma_{12} & \gamma_{13}\\
  & 0           & \gamma_{23}\\
  &             & 0
\end{pmatrix}.
\end{multline*}
Thus, $\alpha\beta\gamma=\beta\gamma$ and we have proved that $S_{100}$ satisfies the identity $xyz\bumpeq yz$. Clearly, this identity
implies~\eqref{eq:Z4}.

\smallskip

\emph{\textbf{Case 3:}} $S_{010}=\left\{\left(\begin{smallmatrix}
0 &  \alpha_{12} & \alpha_{13}\\
  & 1            & \alpha_{23}\\
  &              & 0
\end{smallmatrix}\right)\mid \alpha_{12},\alpha_{13},\alpha_{23}\in\mathbb{R}\right\}$. It is easy to see that this sub\sgp\ satisfies
the identity $xyx\bumpeq x$ which clearly implies~\eqref{eq:Z4}.

\smallskip

\emph{\textbf{Case 4:}} $S_{001}=\left\{\left(\begin{smallmatrix}
0 &  \alpha_{12} & \alpha_{13}\\
  & 0            & \alpha_{23}\\
  &              & 1
\end{smallmatrix}\right)\mid \alpha_{12},\alpha_{13},\alpha_{23}\in\mathbb{R}\right\}$. This case reduces to Case 2 since
$S_{001}=S_{100}^D$.

\smallskip

\emph{\textbf{Case 5:}} $S_{110}=\left\{\left(\begin{smallmatrix}
1 &  \alpha_{12} & \alpha_{13}\\
  & 1            & \alpha_{23}\\
  &              & 0
\end{smallmatrix}\right)\mid \alpha_{12},\alpha_{13},\alpha_{23}\in\mathbb{R}\right\}$. Multiplying 3 arbitrary matrices $\alpha,\beta,\gamma\in S_{110}$,
we get
\begin{multline*}
\begin{pmatrix}
1 & \alpha_{12} & \alpha_{13}\\
  & 1           & \alpha_{23}\\
  &             & 0
\end{pmatrix}\cdot
\begin{pmatrix}
1 & \beta_{12}  & \beta_{13}\\
  & 1           & \beta_{23}\\
  &             & 0
\end{pmatrix}\cdot
\begin{pmatrix}
1 & \gamma_{12} & \gamma_{13}\\
  & 1           & \gamma_{23}\\
  &             & 0
\end{pmatrix}=\\
\begin{pmatrix}
1 & \alpha_{12}+\beta_{12}+\gamma_{12} & \gamma_{13}+(\alpha_{12}+\beta_{12})\gamma_{23}\\
  & 1           & \gamma_{23}\\
  &             & 0
\end{pmatrix}
\end{multline*}
whence the product $\alpha\beta\gamma$ depends only on $\gamma$ and on the sum $\alpha_{12}+\beta_{12}$. Thus,
$\alpha\beta\gamma=\beta\alpha\gamma$ and we have proved that $S_{110}$ satisfies the identity $xyz\bumpeq yxz$. This identity
implies~\eqref{eq:Z4}.

\smallskip

\emph{\textbf{Case 6:}} $S_{101}=\left\{\left(\begin{smallmatrix}
1 &  \alpha_{12} & \alpha_{13}\\
  & 0            & \alpha_{23}\\
  &              & 1
\end{smallmatrix}\right)\mid \alpha_{12},\alpha_{13},\alpha_{23}\in\mathbb{R}\right\}$. Take an arbitrary homomorphism
$\varphi:\{x_1,x_2,x_3,x_4\}^+\to S_{101}$ and let $\alpha=x_1\varphi$, $\beta=x_2\varphi$, $\gamma=x_3\varphi$, and $\delta=x_4\varphi$.
Then one can calculate that both $Z_4\varphi$ and $(x_1x_2x_1x_2x_1x_3x_1x_4x_1x_3x_1x_2x_1x_2x_1)\varphi$ are equal to the matrix
$\left(\begin{smallmatrix}
1 & \alpha_{12} & \varepsilon\\
  & 0           & \alpha_{23}\\
  &             & 1
\end{smallmatrix}\right)$ where $\varepsilon$ stands for the following expression:
$$8\alpha_{13}+4\beta_{13}+2\gamma_{13}+\delta_{13}+\alpha_{12}(4\beta_{23}+2\gamma_{23}+\delta_{23})
+(4\beta_{12}+2\gamma_{12}+\delta_{12})\alpha_{23}.$$ Thus, the identity~\eqref{eq:Z4} holds on $S_{101}$.

For readers familiar with the Rees matrix construction (cf. \cite[Chapter~3]{CP61}), we outline a more conceptual proof for the fact that
$S_{101}$ satisfies~\eqref{eq:Z4}. Let $G=\langle\mathbb{R},+\rangle$ stand for the additive group of real numbers and let $P$ be the
$\mathbb{R}{\times}\mathbb{R}$-matrix over $G$ whose element in the $r$th row and the $s$th column is equal to $rs$. One readily verifies
that the map $\left(\begin{smallmatrix}
1 & \alpha_{12} & \alpha_{13}\\
  & 0           & \alpha_{23}\\
  &             & 1
\end{smallmatrix}\right)\mapsto (\alpha_{23},\alpha_{13},\alpha_{12})$
constitutes an isomorphism of the \sgp\ $S_{101}$ onto the Rees matrix \sgp\ $\mathrm{M}(\mathbb{R},G,\mathbb{R};P)$. It is known (see,
e.g., \cite{KR79}) and easy to verify that every Rees matrix \sgp\ over an Abelian group satisfies each identity $u\bumpeq v$ for which the
following three conditions hold: the first letter of $u$ is the same as the first letter of $v$;  the last letter of $u$ is the same as the
last letter of $v$; for each ordered pair of letters the number of occurrences of this pair is the same in $u$ and $v$. Inspecting the
identity~\eqref{eq:Z4}, one immediately sees that it satisfies the three conditions whence it holds in the \sgp\
$\mathrm{M}(\mathbb{R},G,\mathbb{R};P)$ and also in the \sgp\ $S_{101}$ isomorphic to $\mathrm{M}(\mathbb{R},G,\mathbb{R};P)$.

\smallskip

\emph{\textbf{Case 7:}} $S_{011}=\left\{\left(\begin{smallmatrix}
0 &  \alpha_{12} & \alpha_{13}\\
  & 1            & \alpha_{23}\\
  &              & 1
\end{smallmatrix}\right)\mid \alpha_{12},\alpha_{13},\alpha_{23}\in\mathbb{R}\right\}$. This case reduces to Case 5 since
$S_{011}=S_{110}^D$.

\smallskip

\emph{\textbf{Case 8:}} $S_{111}=\left\{\left(\begin{smallmatrix}
1 &  \alpha_{12} & \alpha_{13}\\
  & 1            & \alpha_{23}\\
  &              & 1
\end{smallmatrix}\right)\mid \alpha_{12},\alpha_{13},\alpha_{23}\in\mathbb{R}\right\}$. The \sgp\ $S_{111}$ is in fact the group of all real
upper unitriangular $3\times 3$-matrices. The latter group is known to be nilpotent of class~2, and as observed by Mal'cev~\cite{Ma53},
every nilpotent group of class~2 satisfies the \sgp\ identity
\begin{equation}
\label{eq:class2} xzytyzx\bumpeq yzxtxzy.
\end{equation}
Now we verify that~\eqref{eq:Z4} follows from~\eqref{eq:class2}. For this, we substitute in~\eqref{eq:class2} the letter $x_1$ for $x$, the
letter $x_3$ for $z$, the word $x_1x_2x_1$ for $y$, and the letter $x_4$ for $t$. We then obtain the identity
\begin{multline*}
\underbrace{x_1}_x\underbrace{x_3}_z\underbrace{x_1x_2x_1}_y\underbrace{x_4}_t\underbrace{x_1x_2x_1}_y\underbrace{x_3}_z\underbrace{x_1}_x
\bumpeq\\
\underbrace{x_1x_2x_1}_y\underbrace{x_3}_z\underbrace{x_1}_x\underbrace{x_4}_t\underbrace{x_1}_x\underbrace{x_3}_z\underbrace{x_1x_2x_1}_y.
\end{multline*}
Multiplying this identity through by $x_1x_2$ on the left and by $x_2x_1$ on the right, we get~\eqref{eq:Z4}.
\end{proof}

Recall that a \sgp\ identity $u\bumpeq v$ is said to be \emph{balanced} if for every letter the number of occurrences of this letter is the
same in $u$ and $v$. Clearly, the identity~\eqref{eq:Z4} is balanced.

\begin{lemma}[{\mdseries\cite[Lemma~3.3]{Sa87}}]
\label{lm:sapir} If a \sgp\ variety $\mathbf{V}$ satisfies a nontrivial balanced identity of the form $Z_n\bumpeq v$, then all periodic
members of $\mathbf{V}$ are locally finite.
\end{lemma}

Let $\mathbf{U}$ stand for the \sgp\ variety defined by the identity~\eqref{eq:Z4}. Then Lemma~\ref{lm:sapir} ensures that all periodic
members of $\mathbf{U}$ are locally finite while Lemma~\ref{lm:support} and Proposition~\ref{pr:classes} imply that the \sgp\
$\mathrm{UT}_3(\mathbb{R})$ lies in the Mal'cev product $\mathbf{U}\malcev\var\mathrm{D}_3$. The variety $\var\mathrm{D}_3$ is locally
finite as a variety generated by a finite \sgp\ \cite[Theorem~10.16]{BS81}. We see that the variety $\var\mathrm{UT}_3(\mathbb{R})$
satisfies the condition (i$'$).

It remains to verify that $\var\mathrm{UT}_3(\mathbb{R})$ satisfies the condition (ii) as well. Clearly, the involutory version of the
condition (ii) is stronger than its plain version so that it suffices to show that each Zimin word is an involutory isoterm relative to
$\mathrm{UT}_3(\mathbb{R})$ considered as an \is.

Let $\mathrm{TA}_2^1$ stand for the \is\ formed by the $(0,1)$-matrices
$$\begin{pmatrix} 0 & 0\\ 0 & 0\end{pmatrix},\
\begin{pmatrix} 1 & 0\\ 0 & 0\end{pmatrix},\
\begin{pmatrix} 0 & 1\\ 0 & 0\end{pmatrix},\
\begin{pmatrix} 1 & 0\\ 1 & 0\end{pmatrix},\
\begin{pmatrix} 0 & 1\\ 0 & 1\end{pmatrix},\
\begin{pmatrix} 1 & 0\\ 0 & 1\end{pmatrix}$$
under the usual matrix multiplication and the unary operation that swaps each of the matrices $\left(\begin{smallmatrix} 1 & 0\\ 0 &
0\end{smallmatrix}\right)$ and $\left(\begin{smallmatrix} 0 & 1\\ 0 & 1\end{smallmatrix}\right)$ with the other one and fixes the rest four
matrices. By \cite[Corollaries~2.7 and~2.8]{ADV12} each Zimin word is an involutory isoterm relative to $\mathrm{TA}_2^1$. Now consider the
involution sub\sgp\ $M$ in $\mathrm{UT}_3(\mathbb{R})$ generated by the matrices
$$
e=\begin{pmatrix}
1 & 0 & 0\\
  & 1 & 0\\
  &   & 1
\end{pmatrix},\quad
x=\begin{pmatrix}
1 & 0 & 0\\
  & 0 & 0\\
  &   & 1
\end{pmatrix},\quad
\text{and}\quad y=\begin{pmatrix}
1 & 1 & 0\\
  & 0 & 1\\
  &   & 1
\end{pmatrix}. $$
Clearly, for each matrix $(\mu_{ij})\in M$, one has $\mu_{ij}\ge0$ and $\mu_{11}=\mu_{33}=1$, whence the set $N$ of all matrices
$(\mu_{ij})\in M$ such that $\mu_{13}>0$ forms an ideal in $M$. Clearly, $N$ is closed under the skew transposition. A straightforward
calculation shows that, besides $e$, $x$, and $y$, the only matrices in $M\setminus N$ are $xy=\left(\begin{smallmatrix}
1 & 1 & 0 \\
  & 0 & 0\\
  &   & 1
\end{smallmatrix}\right)$ and
$yx=\left(\begin{smallmatrix}
1 & 0 & 0 \\
  & 0 & 1\\
  &   & 1
\end{smallmatrix}\right)$.
Consider the following bijection between $M\setminus N$ and the set of non-zero matrices in $\mathrm{TA}_2^1$:
$$e\mapsto\left(\begin{smallmatrix}
1 & 0 \\ 0 & 1
\end{smallmatrix}\right),\quad
x\mapsto\left(\begin{smallmatrix} 1 & 0 \\ 1 & 0
\end{smallmatrix}\right),\quad
y\mapsto\left(\begin{smallmatrix} 0 & 1 \\ 0 & 0
\end{smallmatrix}\right),\quad
xy\mapsto\left(\begin{smallmatrix} 0 & 1 \\ 0 & 1
\end{smallmatrix}\right),\quad
yx\mapsto\left(\begin{smallmatrix} 1 & 0 \\ 0 & 0
\end{smallmatrix}\right). $$
Extending this bijection to $M$ by sending all elements from $N$ to $\left(\begin{smallmatrix} 0 & 0 \\ 0 & 0
\end{smallmatrix}\right)$ yields an \is\ homomorphism from $M$ onto $\mathrm{TA}_2^1$. Thus, $\mathrm{TA}_2^1$ as a homomorphic image
of an involution sub\sgp\ in $\mathrm{UT}_3(\mathbb{R})$ satisfies all \is\ identities that hold in $\mathrm{UT}_3(\mathbb{R})$. Therefore
each Zimin word is an involutory isoterm relative to $\mathrm{UT}_3(\mathbb{R})$, as required.
\end{proof}

\section{Concluding remarks and an open question}
\label{sec:open problems}

Here we discuss which conditions of Theorem~\ref{thm:matrix} are essential and which can be relaxed.

It should be clear from the above proof of Theorem~\ref{thm:matrix} that the fact that we have dealt with matrices over the field
$\mathbb{R}$ is not really essential: the proof works for every semigroup of the form $\mathrm{UT}_3(R)$ where $R$ is an arbitrary
associative and commutative ring with 1 such that
\begin{equation}
\label{eq:char} \underbrace{1+1+\dots+1}_{\text{$n$ times}}\ne 0
\end{equation}
for every positive integer $n$. For instance, we can conclude that the \sgp\ $\mathrm{UT}_3(\mathbb{Z})$ of all upper triangular integer
$3\times 3$-matrices whose main diagonal entries are 0s and/or 1s is \nfb\ in both plain and \is\ settings.

On the other hand, we cannot get rid of the restriction imposed on the main diagonal entries: as the example reproduced in the introduction
implies, the \sgp\ $\mathrm{T}_3(\mathbb{Z})$ of all upper triangular integer $3\times 3$-matrices is \fb\ as a plain \sgp\ since it
satisfies only trivial \sgp\ identities. In a similar way one can show that $\mathrm{T}_3(\mathbb{Z})$ is \fb\ when considered as an \is\
with the skew transposition. Indeed, the sub\sgp\ generated in $\mathrm{T}_3(\mathbb{Z})$ by the matrix $\zeta=\left(\begin{smallmatrix}
2 & 0 & 0 \\
  & 1 & 1\\
  &   & 2
\end{smallmatrix}\right)$ and its skew transpose $\zeta^D$ is free, and therefore, considered as an \is, it is isomorphic to the free involution
semigroup on one generator, say $z$. However $\FI(\{z\})$ contains as an involution subsemigroup a free involutory semigroup on countably
many generators, namely, $\FI(Z)$ where
$$Z=\{zz^*z,\ z(z^*)^2z,\ \dots,\ z(z^*)^nz,\ \dots\}.$$
Hence $\mathrm{T}_3(\mathbb{Z})$  satisfies only trivial \is\ identities. Of course, the same conclusions persist if we substitute
$\mathbb{Z}$ by any associative and commutative ring with 1 satisfying~\eqref{eq:char} for every $n$.

We can however slightly weaken the restriction on the main diagonal entries by allowing them to take values in the set $\{0,\pm1\}$. The
proof of Theorem~\ref{thm:matrix} remains valid for the resulting [involution] \sgp\ that we denote by $\mathrm{UT}^\pm_3(\mathbb{R})$.
Indeed, the homomorphism $\alpha\mapsto\Diag(\alpha)$ of Lemma~\ref{lm:support} extends to a homomorphism of
$\mathrm{UT}^\pm_3(\mathbb{R})$ onto its 27-element subsemigroup consisting of diagonal matrices. The sub\sgp\ classes of the kernel of
this homomorphism are precisely the sub\sgps\ $S_{000},\dots,S_{111}$ from the proof of Proposition~\ref{pr:classes}, and therefore, the
variety $\var\mathrm{UT}^\pm_3(\mathbb{R})$ satisfies the condition (i$'$) of the stronger form of Theorem~\ref{thm:main}. Of course, the
variety fulfils also the condition (ii) since (ii) is inherited by supervarieties. In the same fashion, the proof of
Theorem~\ref{thm:matrix} applies, say, to the semigroup of all upper triangular complex $3\times 3$-matrices whose main diagonal entries
come from the set $\{0,1,\xi,\dots,\xi^{n-1}\}$ where $\xi$ is a primitive $n$th root of unity.

The question of whether or not a result similar to Theorem~\ref{thm:matrix} holds true for analogs of the \sgp\ $\mathrm{UT}_3(\mathbb{R})$
in other dimensions is more involved. The variety $\var\mathrm{UT}_2(\mathbb{R})$ fulfils the condition (i$'$) since the condition is
clearly inherited by subvarieties and the injective map $\mathrm{UT}_2(\mathbb{R})\to \mathrm{UT}_3(\mathbb{R})$ defined by
$\left(\begin{smallmatrix}
\mathstrut\alpha_{11} & \alpha_{12}\\
\mathstrut            & \alpha_{22}
\end{smallmatrix}\right)\mapsto\left(\begin{smallmatrix}
\alpha_{11} & 0  & \alpha_{12}\\
  & 0            & 0\\
  &              & \alpha_{22}
\end{smallmatrix}\right)$ is an embedding of [involution] \sgps\ whence
$\var\mathrm{UT}_2(\mathbb{R})\subseteq\var\mathrm{UT}_3(\mathbb{R})$. However, $\var\mathrm{UT}_2(\mathbb{R})$ does not satisfy the
condition (ii) as the following result shows.

\begin{proposition}
\label{pr:2x2} The  \sgp\ $\mathrm{UT}_2(\mathbb{R})$ of all upper triangular real $2\times 2$-matrices whose main diagonal entries are
$0$s and/or $1$s satisfies the identity
\begin{equation}
\label{eq:Z4new} Z_4\bumpeq x_1x_2x_1x_3x_1^2x_2x_4x_2x_1^2x_3x_1x_2x_1.
\end{equation}
\end{proposition}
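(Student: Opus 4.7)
My plan is to verify~\eqref{eq:Z4new} by direct evaluation on an arbitrary substitution. Set $\alpha_i=\begin{pmatrix} d_i & a_i \\ 0 & e_i\end{pmatrix}$ with $d_i,e_i\in\{0,1\}$ and $a_i\in\mathbb{R}$ for $i=1,2,3,4$. First I would note that~\eqref{eq:Z4new} is balanced: a quick count confirms that $x_1,x_2,x_3,x_4$ occur $8,4,2,1$ times, respectively, in both words. So the two diagonal entries of the matrix products agree automatically, since in $\mathrm{UT}_2(\mathbb{R})$ the diagonal of a product is just the componentwise product of the factors' diagonals. It thus remains only to check the $(1,2)$-entries.

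The standard formula for products of upper triangular $2\times 2$ matrices expresses the $(1,2)$-entry of $\prod_{k=1}^{n}\alpha_{u_k}$ as the linear form $\sum_{k=1}^{n}P_k\,a_{u_k}\,S_k$ with $P_k=\prod_{j<k}d_{u_j}$ and $S_k=\prod_{j>k}e_{u_j}$. Because $d_i,e_i\in\{0,1\}$, each $P_k$ and $S_k$ is itself $0$ or $1$. Setting $Z_d=\{i:d_i=0\}$ and $Z_e=\{i:e_i=0\}$, one has $P_k=1$ iff no $Z_d$-letter occurs in positions $<k$, which by monotonicity amounts to $k\le i_d$, where $i_d$ is the leftmost position of a $Z_d$-letter (or $n+1$ if $Z_d$ is empty); analogously $S_k=1$ iff $k\ge j_e$, with $j_e$ the rightmost position of a $Z_e$-letter (or $0$ if $Z_e$ is empty). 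So the coefficient of each $a_i$ equals the number of occurrences of $x_i$ inside the interval $[j_e,i_d]$, and it remains to show that these counts agree on the two sides of~\eqref{eq:Z4new}.

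The decisive observation is that both sides of~\eqref{eq:Z4new} share the same first-occurrence positions $1,2,4,8$ and the same last-occurrence positions $15,14,12,8$ of the variables $x_1,x_2,x_3,x_4$. Consequently $i_d$ ranges over $\{1,2,4,8,16\}$ and $j_e$ over $\{0,8,12,14,15\}$, no matter which word one reads them from. The two sides of~\eqref{eq:Z4new} differ only at positions $6,7,9,10$: there the left-hand side has $x_2x_1\cdot x_4\cdot x_1x_2$ while the right-hand side has $x_1x_2\cdot x_4\cdot x_2x_1$. Since none of $6,7,9,10$ is an admissible value of $i_d$ or $j_e$, every interval $[j_e,i_d]$ contains either both positions of $\{6,7\}$ or neither, and likewise for $\{9,10\}$. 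As each of these two pairs carries the multiset $\{x_1,x_2\}$ on both sides, the counts of every $x_i$ inside $[j_e,i_d]$ coincide, completing the verification.

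The hard part is really just spotting the right framework. Once one notices that agreement of first- and last-occurrence positions forces $i_d$ and $j_e$ to be the same for both words, and that the unique $x_4$ at position $8$ sits between the two ``swap blocks'' $\{6,7\}$ and $\{9,10\}$ while every other admissible endpoint lies entirely outside them, the verification is pure bookkeeping: no case enumeration over the $2^8=256$ possible diagonal patterns is needed.
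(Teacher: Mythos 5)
Your proof is correct, and it takes a genuinely different route from the paper's. The paper argues by successive case elimination on the diagonal entries of $x_1\varphi$ and $x_2\varphi$: if $\alpha_{11}=0$ or $\alpha_{22}=0$ (resp.\ $\beta_{11}=0$ or $\beta_{22}=0$) both sides are computed explicitly and seen to agree, and in the remaining case $\alpha_{11}=\alpha_{22}=\beta_{11}=\beta_{22}=1$ one observes that $x_1x_2x_1$, $x_1^2x_2$, and $x_2x_1^2$ all evaluate to the same matrix, so that the factor substitutions producing the right-hand side of \eqref{eq:Z4new} from $Z_4$ do not change the product. You instead write the $(1,2)$-entry of an arbitrary product as the linear form $\sum_k P_k a_{u_k} S_k$ and reduce everything to a single combinatorial criterion: the two words have the same content, the same first-occurrence positions ($1,2,4,8$) and last-occurrence positions ($15,14,12,8$), so the cut-off indices $i_d$ and $j_e$ coincide for both words, and the only positions where the words differ ($6,7$ and $9,10$) form blocks carrying the multiset $\{x_1,x_2\}$ that every admissible interval $[j_e,i_d]$ contains entirely or not at all. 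All your steps check out (in particular the possible values $i_d\in\{1,2,4,8,16\}$ and $j_e\in\{0,8,12,14,15\}$ are correct, and neither set meets $\{6,7,9,10\}$). What your version buys is uniformity and reusability: it isolates a sufficient combinatorial condition for an identity to hold in $\mathrm{UT}_2(\mathbb{R})$ (indeed in $\mathrm{UT}_2$ over any commutative ring), with no dependence on which diagonal entries vanish. What the paper's version buys is brevity of verification and a transparent explanation of \emph{why} the particular word on the right of \eqref{eq:Z4new} was chosen, namely as $Z_4$ with two occurrences of $x_1x_2x_1$ replaced by words equal to it under the residual assumption on the diagonals.
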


\begin{proof}
Fix an arbitrary homomorphism $\varphi:\{x_1,x_2,x_3,x_4\}^+\to\mathrm{UT}_2(\mathbb{R})$. For brevity, denote the right hand side
of~\eqref{eq:Z4new} by $w$; we thus have to prove that $Z_4\varphi=w\varphi$. Let
$$x_1\varphi=\left(\begin{smallmatrix}
\mathstrut\alpha_{11} & \alpha_{12}\\
\mathstrut            & \alpha_{22}
\end{smallmatrix}\right),\ \ x_2\varphi=\left(\begin{smallmatrix}
\beta_{11} & \beta_{12}\\
           & \beta_{22}
\end{smallmatrix}\right),\ \ x_3\varphi=\left(\begin{smallmatrix}
\gamma_{11} & \gamma_{12}\\
            & \gamma_{22}
\end{smallmatrix}\right),\ \ x_4\varphi=\left(\begin{smallmatrix}
\delta_{11} & \delta_{12}\\
            & \delta_{22}
\end{smallmatrix}\right),$$
where $\alpha_{11},\alpha_{22},\beta_{11},\beta_{22},\gamma_{11},\gamma_{22},\delta_{11},\delta_{22}\in\{0,1\}$ and
$\alpha_{12},\beta_{12},\gamma_{12},\delta_{12}\in\mathbb{R}$.

If $\alpha_{22}=0$, the fact that $\alpha_{11},\beta_{11},\gamma_{11},\delta_{11}\in\{0,1\}$ readily implies that
$Z_4\varphi=w\varphi=\left(\begin{smallmatrix}
\mathstrut \varepsilon & \varepsilon\alpha_{12}\\
\mathstrut             & 0
\end{smallmatrix}\right)$, where $\varepsilon=\alpha_{11}\beta_{11}\gamma_{11}\delta_{11}$.
Similarly, if $\alpha_{11}=0$, then it is easy to calculate that $Z_4\varphi=w\varphi=\left(\begin{smallmatrix}
\mathstrut 0 & \alpha_{12}\eta\\
\mathstrut   & \eta
\end{smallmatrix}\right)$, where $\eta=\alpha_{22}\beta_{22}\gamma_{22}\delta_{22}$.
We thus may (and will) assume that $\alpha_{11}=\alpha_{22}=1$.

Now, if $\beta_{22}=0$, a straightforward calculation shows that $Z_4\varphi=w\varphi=\left(\begin{smallmatrix}
\mathstrut \varkappa & \varkappa(\alpha_{12}+\beta_{12})\\
\mathstrut             & 0
\end{smallmatrix}\right)$, where $\varkappa=\beta_{11}\gamma_{11}\delta_{11}$. Similarly, if $\beta_{11}=0$, we get
$Z_4\varphi=w\varphi=\left(\begin{smallmatrix}
\mathstrut 0 & (\alpha_{12}+\beta_{12})\lambda\\
\mathstrut   & \lambda
\end{smallmatrix}\right)$, where $\lambda=\beta_{22}\gamma_{22}\delta_{22}$. Thus, we may also assume that $\beta_{11}=\beta_{22}=1$.
Observe that the word $w$ is obtained from the word $Z_4$ by substituting $x_1^2x_2$ for the second occurrence of the factor $x_1x_2x_1$
and $x_2x_1^2$ for the third occurrence of this factor. Therefore $\alpha_{11}=\alpha_{22}=\beta_{11}=\beta_{22}=1$ implies
$x_1x_2x_1\varphi=x_1^2x_2\varphi=x_2x_1^2\varphi=\left(\begin{smallmatrix}
\mathstrut 1 & 2\alpha_{12}+\beta_{12}\\
\mathstrut   & 1
\end{smallmatrix}\right)$ whence $Z_4\varphi=w\varphi$.
\end{proof}

Now let $\mathrm{UT}_n(\mathbb{R})$ stand for the \sgp\ of all upper triangular real $n\times n$-matrices whose main diagonal entries are
$0$s and/or $1$s and assume that $n\ge 4$. Here the behaviour of the [involution] \sgp\ variety generated by $\mathrm{UT}_n(\mathbb{R})$
with respect to the conditions of Theorem~\ref{thm:main} is in a sense opposite. Namely, it is not hard to show (by using an argument
similar to the one utilized in the proof of Theorem~\ref{thm:matrix}) that the variety $\var\mathrm{UT}_n(\mathbb{R})$ with $n\ge 4$
satisfies the condition (ii). On the other hand, the approach used in the proof of Theorem~\ref{thm:matrix} fails to justify that this
variety fulfils (i$'$). In order to explain this claim, suppose for simplicity that $n=4$. Then the homomorphism
$\alpha\mapsto\Diag(\alpha)$ maps $\mathrm{UT}_4(\mathbb{R})$ onto its 16-element subsemigroup consisting of diagonal matrices which all
are idempotent. This induces a partition of $\mathrm{UT}_4(\mathbb{R})$ into 16 sub\sgps, and to mimic the proof of
Theorem~\ref{thm:matrix} one should show that all these sub\sgps\ belong to a variety whose periodic members are locally finite. One of
these 16 sub\sgps\ is nothing but the group of all real upper unitriangular $4\times 4$-matrices. The latter group is known to be nilpotent
of class~3, and one might hope to use the identity
\begin{equation}
\label{eq:class3} xzytyzxsyzxtxzy\bumpeq yzxtxzysxzytyzx,
\end{equation}
proved by Mal'cev~\cite{Ma53} to hold in every nilpotent group of class~3, along the lines of the proof of Proposition~\ref{pr:classes}
where we have invoked Mal'cev's identity holding in each nilpotent group of class~2. However, it is known~\cite[Theorem~2]{Zi80} that the
variety defined by~\eqref{eq:class3} contains infinite finitely generated periodic \sgps. Even though this fact does not yet mean that the
condition  (i$'$) fails in $\var\mathrm{UT}_4(\mathbb{R})$, it demonstrates that the techniques presented in this paper are not powerful
enough to verify whether or not the variety obeys this condition. It seems that this verification constitutes a very difficult task as it
is closely connected with Sapir's longstanding conjecture that for each nilpotent group $G$, periodic members of the \sgp\ variety $\var G$
are locally finite, see \cite[Section~5]{Sa87}.

Back to our discussion, we see that Theorem~\ref{thm:main} cannot be applied to the \sgp\ $\mathrm{UT}_2(\mathbb{R})$ and we are not in a
position to apply it to the \sgps\ $\mathrm{UT}_n(\mathbb{R})$ with $n\ge4$. Of course, this does not indicate that these \sgps\ are
\fb---recall that Theorem~\ref{thm:main} is only a sufficient condition for being \nfb. Presently, we do not know which of the \sgps\
$\mathrm{UT}_n(\mathbb{R})$ with $n\ne3$ possess the finite basis property, and we conclude the paper with explicitly stating this open
question in the anticipation that, over time, looking for an answer might stimulate creating new approaches to the \fbp\ for infinite
[involution] \sgps:

\begin{question}
For which $n\ne3$ is the [involution] \sgp\ $\mathrm{UT}_n(\mathbb{R})$ \fb?
\end{question}

\medskip

\small

\noindent\textbf{Acknowledgements.} The author gratefully acknowledges support from the Presidential Programme ``Leading Scientific Schools
of the Russian Federation'', project no.\ 5161.2014.1, and from the Russian Foundation for Basic Research, project no.\ 14-01-00524. In
particular, the latter project was used to support the author's participation in the International Conference on Semigroups, Algebras and
Operator Theory (ICSAOT-2014) held at the Cochin University of Science and Technology as well as his participation in the International
Conference on Algebra and Discrete Mathematics (ICADM-2014) held at Government College, Kattappana. Also the author expresses cordial
thanks to the organizers of these two conferences, especially to Dr. P. G. Romeo, Convenor of ICSAOT-2014, and Sri. G. N. Prakash, Convenor
of ICADM-2014, for their warm hospitality.


\begin{thebibliography}{99}
\bibitem{ACHLV}
K. Auinger, Yuzhu Chen, Xun Hu, Yanfeng Luo, M. V. Volkov, The finite basis problem for Kauffman monoids, Algebra Universalis, accepted. [A
preprint is available under \url{http://arxiv.org/abs/1405.0783}.]

\bibitem{ADV12}
K. Auinger, I. Dolinka, M. V. Volkov, Matrix identities involving multiplication and transposition, J. European Math. Soc. \textbf{14}
(2012), 937--969.

\bibitem{Au66}
A. K. Austin, A closed set of laws which is not generated by a finite set of laws, Quart. J. Math. Oxford Ser. (2) \textbf{17} (1966),
11--13.

\bibitem{BaOl75}
Yu. A. Bahturin, A. Yu. Ol'shanskii, Identical relations in finite Lie rings, Mat. Sb., N. Ser. \textbf{96(138)} (1975), 543--559 [Russian;
English translation: Mathematics of the USSR-Sbornik \textbf{25} (1975), 507--523].

\bibitem{Bi65}
A. P. Biryukov, On infinite collections of identities in semigroups, Algebra i Logika \textbf{4}, no. 2 (1965), 31--32 [Russian].

\bibitem{BCK04}
V. D. Blondel, J. Cassaigne, J. Karhum\"aki, Freeness of multiplicative matrix semigroups, Problem 10.3 in:  V. D. Blondel and A. Megretski
(eds.), \textsf{Unsolved Problems in Mathematical Systems and Control Theory}, Princeton University Press, 2004, 309--314.

\bibitem{BS81}
S. Burris, H. P. Sankappanavar, \textsf{A Course in Universal Algebra}, Springer-Verlag, 1981.

\bibitem{CP61}
A. H. Clifford,  G. B. Preston, \textsf{The Algebraic Theory of Semigroups. Vol.I}, Amer.\ Math.\ Soc., 1961.

\bibitem{KR79}
K. H. Kim, F. Roush,  The semigroup of adjacency patterns of words, in: \textsf{Algebraic Theory of Semigroups}, Colloq. Math. Soc. J\'anos
Bolyai, \textbf{20}, North-Holland, 1979, 281--297.

\bibitem{Kr73}
R. L. Kruse, Identities satisfied by a finite ring. J. Algebra \textbf{26} (1973), 298--318.

\bibitem{Lv73}
I. V. L'vov,  Varieties of associative rings. I. Algebra i Logika \textbf{12} (1973), 269--297 [Russian; English translation: Algebra and
Logic \textbf{12} (1973), 150--167].

\bibitem{Ma53}
A. I. Mal'cev, Nilpotent semigroups,  Ivanov. Gos. Ped. Inst. U\v{c}. Zap. Fiz.-Mat. Nauki \textbf{4} (1953), 107--111 [Russian].

\bibitem{McK70}
R. N. McKenzie, Equational bases for lattice theories, Math.\ Scand. \textbf{27} (1970), 24--38.

\bibitem{OaPo64}
S. Oates, M. B. Powell, Identical relations in finite groups, J. Algebra \textbf{1} (1964), 11--39.

\bibitem{Pe66}
P. Perkins, \textsf{Decision Problems for Equational Theories of Semigroups and General Algebras}, Ph.D. thesis, Univ.\ of California,
Berkeley, 1966.

\bibitem{Pe69}
P. Perkins, Bases for equational theories of semigroups, J. Algebra \textbf{11} (1969), 298--314.

\bibitem{Sa87}
M. V. Sapir, Problems of Burnside type and the finite basis property in varieties of semigroups, Izv. Akad. Nauk SSSR Ser. Mat. \textbf{51}
(1987), 319--340 [Russian; English translation: Math.\ USSR--Izv. \textbf{30} (1987), 295--314].

\bibitem{Vo01}
M. V. Volkov, The finite basis problem for finite semigroups, Sci.\ Math.\ Jpn. \textbf{53} (2001), 171--199. [A periodically updated
version is avalaible under \url{http://csseminar.kadm.usu.ru/MATHJAP_revisited.pdf}.]

\bibitem{Zi80}
A. I. Zimin, Semigroups that are nilpotent in the sense of Mal'cev, Izv. Vyssh. Uchebn. Zaved. Mat. (1980), no.6, 23--29 [Russian].
\end{thebibliography}
\end{document}